\theoremstyle{plain}
\newtheorem{thm}{Theorem}[section]
\newtheorem{lem}[thm]{Lemma}
\newtheorem{prop}[thm]{Proposition}
\theoremstyle{definition}
\newtheorem{defn}[thm]{Definition}
\begin{document}
\title{The structures of higher rank lattice actions on dendrites}

\author[E.~Shi]{Enhui Shi}
\address[E. Shi]{Soochow University, Suzhou, Jiangsu 215006, China}
\email{ehshi@suda.edu.cn}

\author[H.~Xu]{Hui Xu}
\address[H. Xu]{CAS Wu Wen-Tsun Key Laboratory of Mathematics, University of Science and
Technology of China, Hefei, Anhui 230026, China}
\email{huixu2734@ustc.edu.cn}

\keywords{higher rank lattice, orderable group, dendrite, periodic point}

\subjclass[2010]{54H20, 37B20}

\maketitle


\begin{abstract}
Let $\Gamma$ be a higher rank lattice acting on a nondegenerate dendrite $X$ with no infinite order points. We show that
there exists a nondegenerate subdendrite $Y$ which is $\Gamma$-invariant and satisfies the following items:

 (1) There is   an inverse system of finite actions $\{(Y_i, \Gamma):i=1,2,3,\cdots\}$ with monotone bonding maps $\phi_i: Y_{i+1}\rightarrow Y_i$ and with each $Y_i$ being a dendrite, such that $(Y, \Gamma|Y)$ is topologically conjugate to the inverse limit $(\underset{\longleftarrow}{\lim}(Y_i, \Gamma), \Gamma)$.

(2) The first point map $r:X\rightarrow Y$ is a factor map from $(X, \Gamma)$ to $(Y, \Gamma|Y)$; if $x\in X\setminus Y$, then $r(x)$ is an end point of $Y$ with infinite orbit; for each $y\in Y$, $r^{-1}(y)$ is contractible, that is there is a sequence $g_i\in \Gamma$ with ${\rm diam}(g_ir^{-1}(y))\rightarrow 0$.

\end{abstract}

\pagestyle{myheadings} \markboth{E. Shi and H. Xu} {Higher rank lattice actions }

\section{Introduction}

It was conjectured that  every continuous action on the circle by a higher rank lattice must
factor through a finite group action (the so called $1$-dimensional Zimmer's rigidity conjecture).  Burger-Monod \cite{Bu} and Ghys \cite{Gh99}
 proved independently the existence of finite orbits for higher rank lattice actions on the circle,
which translates the conjecture into the equivalent form: no higher rank lattice admits a total ordering which is invariant by left translations.
The latter was answered affirmatively by Witte-Morris for arithmetic groups of higher $\mathbb{Q}$-rank \cite{Wi2} and by Deroin-Hurtado
for general higher rank lattices \cite{DH}. Recently there has been a great progress on the Zimmer program for smooth higher
rank lattice actions on manifolds with dimensions $\geq 2$. We do not plan to list all the related results here and just suggest the readers to consult
\cite{Br, Fi} for the surveys.
\medskip

There has been intensively studied around group actions on dendrites very recently. One motivation  is that
dendrites can appeare as the limit sets of some Klein groups, the structures of which
are closely related to the geometric properties of $3$-dimensional hyperbolic manifolds (see e.g. \cite{Bo, Mi}). Also,
the compactifications of the Cayley graphs of free groups are dendrites, which are important for understanding the algebraic
properties of free groups.  Group actions on the circle have been
systematically investigated during the past few decades \cite{Gh01, Na1}. However, group actions on general curves
lack of the same depth of understanding. Dendrites and the circle lie on two
opposite ends of Peano curves in topologies. So, studying group actions on dendrites is the starting point for better understanding group actions
on curves or continua with higher dimensions.  Some people studied the Ghys-Margulis' alternative for dendrite homeomorphism groups (\cite{Shi, Mal, DM2}).
 One may consult \cite{AN, GM, SY} for the discussions around  the structures of minimal sets for group actions on dendrites. The algebraic structures of dendrite
homeomorphism groups were investigated in \cite{DM1}

\medskip
 Duchesne-Monod proved the existence of finite orbits for
higher rank lattice actions on dendrites (\cite{DM2}). This motivates us to consider further
 the structures of higher rank lattice actions on dendrites. Though there are examples indicating that the exact analogy
 to the rigidity results by Witte-Morris and Deroin-Hurtado mentioned above do not hold anymore for higher rank lattice actions on dendrites,
  the authors showed in \cite{SX} that such actions are very restrictive. The aim of the paper is to
 give a more detailed description of the structures of higher rank lattice actions on dendrites. Explicitly, we obtained
 the following theorem.

\begin{thm}\label{main theorem 1}
Let $\Gamma$ be a higher rank lattice acting on a nondegenerate dendrite $X$ with no infinite order points. Then
there exists a nondegenerate subdendrite $Y$ which is $\Gamma$-invariant and satisfies the following items:

(1) There is   an inverse system of finite actions $\{(Y_i, \Gamma):i=1,2,3,\cdots\}$ with monotone bonding maps $\phi_i: Y_{i+1}\rightarrow Y_i$ and with each $Y_i$ being a dendrite, such that $(Y, \Gamma|Y)$ is topologically conjugate to the inverse limit $(\underset{\longleftarrow}{\lim}(Y_i, \Gamma), \Gamma)$.

(2) The first point map $r:X\rightarrow Y$ is a factor map from $(X, \Gamma)$ to $(Y, \Gamma|Y)$; if $x\in X\setminus Y$, then $r(x)$ is an end point of $Y$ with infinite orbit; for each $y\in Y$, $r^{-1}(y)$ is contractible, that is there is a sequence $g_i\in \Gamma$ with ${\rm diam}(g_ir^{-1}(y))\rightarrow 0$..
\end{thm}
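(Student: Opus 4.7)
My plan is to construct $Y$ by exhausting it with finite $\Gamma$-invariant subtrees, to read off the inverse-limit description directly from the tree structure, and then to derive the properties of the first point map as consequences. I would begin from a finite $\Gamma$-orbit $F_1 \subset X$, guaranteed by Duchesne--Monod, and take $Y_1$ to be its convex hull, which is a finite $\Gamma$-invariant tree. Inductively, given a finite $\Gamma$-invariant subtree $Y_i$, I would enlarge by adjoining the convex hulls of finite orbits of certain candidate branch points within distance $1/i$ of $Y_i$; the rigidity results of \cite{SX}, combined with the absence of infinite-order points, should ensure that the required orbits are finite, so that $Y_{i+1}$ is still a finite $\Gamma$-invariant dendrite. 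The first-point retractions $\phi_i \colon Y_{i+1} \to Y_i$ are monotone and $\Gamma$-equivariant, so setting $Y = \overline{\bigcup_i Y_i}$ produces a $\Gamma$-invariant subdendrite that is naturally topologically conjugate to $\varprojlim(Y_i,\Gamma)$.

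I would then analyse the first point map $r \colon X \to Y$, which sends $x$ to the first point of $Y$ met along any arc from $x$ into $Y$; it is a continuous $\Gamma$-equivariant monotone retraction by standard dendrite theory, hence a factor map. For $x \in X \setminus Y$, the image $r(x)$ cannot be a branch point of $Y$, because the arc from $r(x)$ into the component of $X \setminus Y$ containing $x$ would contribute an extra direction at $r(x)$ inside $X$, whereas by construction every branch point of $Y$ already attains its full $X$-order in some $Y_i$. Hence $r(x)$ is an end point of $Y$; and a maximality argument rules out $r(x)$ having a finite $\Gamma$-orbit, since otherwise the enlargement step at some stage could have absorbed the arc emanating from $r(x)$ toward $x$.

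The main obstacle is the contractibility of fibres. Given $y \in Y$, I need $g_i \in \Gamma$ with $\mathrm{diam}(g_i r^{-1}(y)) \to 0$. If no such sequence exists, by compactness of the hyperspace of subcontinua of $X$ in the Hausdorff metric, some sequence of translates $g_i r^{-1}(y)$ converges to a nondegenerate subdendrite $K \subset X$. The crux is to extract from $K$ a contradiction to the construction: an appropriate subgroup of $\Gamma$ acts on $K$, and either $K$ meets the interior of $Y$, contradicting the fact that $g_i r^{-1}(y)$ avoids the interior of $Y$, or $K$ sits outside $Y$ and the finite orbit theorem of Duchesne--Monod combined with the rigidity of \cite{SX} produces a new finite orbit or branch point that the exhaustion should already have captured. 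The delicate point is identifying the correct acting subgroup on $K$ while preserving higher-rank features, and this is where the hypothesis of no infinite-order points and the prior work \cite{SX} play their essential role.
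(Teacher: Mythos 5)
There is a genuine gap. The outer shell of your argument (exhaustion of $Y$ by finite $\Gamma$-invariant trees starting from a Duchesne--Monod finite orbit, monotone first-point bonding maps, the inverse-limit description, and the analysis of $r$) is indeed the scheme of the proof, which the paper itself only cites from \cite{SX}. But every step where the argument must actually \emph{produce} a fixed point or a finite orbit --- ensuring the candidate branch points you adjoin have finite orbits, ruling out that $r(x)$ has finite orbit for $x\in X\setminus Y$ (``a maximality argument rules out\dots''), and disposing of the nondegenerate limit continuum $K$ in the fibre-contractibility step --- is deferred to ``the rigidity results of \cite{SX}'' or left as ``the delicate point.'' Those are precisely the places where the new content of this paper is needed, and your proposal does not supply it.

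The missing ingredient is Theorem \ref{end fixed}: if $\Gamma$ fixes an end point $z$ of a nondegenerate (sub)dendrite, then $\Gamma$ fixes an entire arc $[z,s]$ pointwise. Its proof is the heart of the paper: one picks $t$ with $[z,t]=\bigcap_i[z,x_i]$ for fixed points $x_i$ of the generators, defines the positive cone $P=\{g\in\Gamma: g^{-1}(t)\in[z,t]\}$, checks this is a semilinear left preorder, and then (Proposition \ref{key}) passes to a left-orderable quotient $\Gamma/\widetilde{H}$; Margulis' Normal Subgroup Theorem forces $\widetilde{H}$ to be central-or-finite-index, and Deroin--Hurtado's non-left-orderability of higher rank lattices then forces $\widetilde{H}=\Gamma$, hence $H=\Gamma$ by the finite-generation argument, i.e.\ $t$ (or the infimum $s$ of the orbit of $t$ in Case 2) is globally fixed. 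It is this statement, substituted into the \cite{SX} machinery together with Lemma \ref{index}, that absorbs the arcs emanating from finite-orbit end points of $Y$ and kills non-contractible fibres. Without an argument of this type --- and note that ``no infinite order points'' refers to the Menger--Urysohn order of points of $X$, not to finiteness of orbits, so it cannot by itself ensure the orbits in your enlargement step are finite --- your construction of $Y_{i+1}$ and both of your concluding paragraphs cannot be completed.
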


The proof of Theorem \ref{main theorem 1} relies on the non-left-orderability of higher rank lattices by Deroin-Hurtado, the elementarity
of higher rank lattice actions on dendrites by Duchesne-Monod, and a connection between semilinear-left-orderability of a group and its actions on dendrites
observed by the authors.
\medskip

We will not introduce the related notions and properties around group actions and dendrites. One may consult \cite{SX} for the details.

\section{General facts about semilinear preorders}

Recall that a binary relation $\preceq$ on a set $X$ is a {\it preorder} if it satisfies that
\medskip

\begin{itemize}
\item[(O1)] $x\preceq x$ for any $x\in X$;
\item[(O2)] if $x\preceq y$ and $y\preceq z$ then $x\preceq z$ for any $x,y,z\in X$.
\end{itemize}
\medskip

\noindent If, in addition, $\preceq$ satisfies that
\medskip

\begin{itemize}
\item[(O3)] if $x\preceq y$ and $y\preceq x$ then $x=y$ for any $x,y\in X$,
\end{itemize}
\medskip

\noindent then $\preceq$ is a {\it partial order} on $X$.
\medskip

A preorder (resp. partial order) on $X$ is called a {\it total preorder} ({resp. \it total  order}) on $X$ if
\medskip

\begin{itemize}
\item[(O4)] any $x,y\in X$ are comparable; that is either $x\preceq y$ or $y\preceq x$.
\end{itemize}
\medskip

Let $G$ be a group with $e$ being the unit. A preorder/ partial order/ total order $\preceq$ on $G$ is said to be {\it left-invariant} if
\medskip

\begin{itemize}
\item[(O5)] for any $x,y\in X$ and $g\in G$, $gx\preceq gy$ whenever $x\preceq y$.
\end{itemize}
\medskip

Finally, we say $\preceq$ is a {\it semilinear left preorder} (resp. {\it semilinear left partial order}) if it satisfies (O1), (O2), (O5)  (resp. (O1),(O2),(O3),(O5)) and
\medskip

\begin{itemize}
\item[(O6)] for any $x\in G$, any two elements of $\{ y\in G: y\preceq x\}$ are comparable;
\item[(O7)] for any $x,y\in G$, there is some $z\in G$ with $z\preceq x$ and $z\preceq y$.
\end{itemize}

\medskip

The following characterization of semilinear left preorder is similar to \cite[Theorem 1.1]{Kop}.
\begin{prop}\label{char of semi}
Let $\preceq$ be a semilinear left preorder on a group $G$. Then the positive cone $P:=\{g\in G: e\preceq g\}$ of $\preceq$ has the following properties:
\begin{itemize}
\item[(P1)]  $P$ is a semigroup;
\item[(P2)]  $P\cap P^{-1}$ is a subgroup of $G$;
\item[(P3)]  $G=P^{-1}\cdot P$;
\item[(P4)]  $P\cdot P^{-1}\subseteq P\cup P^{-1}$.
\end{itemize}
If $\preceq$ is further a  partial order, then
\begin{itemize}
\item[(P2')] $P\cap P^{-1}=\{e\}$;
\end{itemize}
is further a total preorder, then
\begin{itemize}
\item[(P5)] $G=P\cup P^{-1}$.
\end{itemize}
Conversely, a subset $P$ of $G$ satisfying (P1)-(P4) (resp. (P1)(P2')(P3)(P4)) determines a semilinear left preorder (resp. semilinear left partial order) on $G$.
\end{prop}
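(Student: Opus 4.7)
The plan is to proceed exactly as in Kopytov's characterization of left-invariant orders, using throughout the basic dictionary
\[
x \preceq y \iff x^{-1}y \in P,
\]
which is valid thanks to left-invariance (O5): indeed $x \preceq y$ implies $e = x^{-1}x \preceq x^{-1}y$, and conversely $e \preceq x^{-1}y$ gives $x \preceq y$ on multiplying by $x$. Every remaining step is then a translation between a statement about the preorder and a statement about $P$.

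For the forward direction I would verify the cone properties one by one. Property (P1) is a direct consequence of (O2): if $e \preceq a$ and $e \preceq b$ then $b = eb \preceq ab$ by (O5), so $e \preceq ab$. Property (P2) follows since $P \cap P^{-1}$ is closed under multiplication and obviously under inversion (it is symmetric by construction) and contains $e$ by (O1); (P2') is just the reformulation of antisymmetry (O3). For (P3), apply the greatest-lower-bound axiom (O7) to $x = e$ and any $g \in G$, obtaining $z$ with $z \preceq e$ and $z \preceq g$; then $z \in P^{-1}$ and $z^{-1}g \in P$, so $g = z(z^{-1}g) \in P^{-1}\cdot P$. For (P4), given $a,b \in P$, note that $a^{-1}, b^{-1} \preceq e$, so both lie in the set $\{y : y \preceq e\}$, which is totally ordered by (O6); the two possible comparisons $a^{-1} \preceq b^{-1}$ and $b^{-1} \preceq a^{-1}$ translate under left multiplication by $a$ or $b$ to $ab^{-1}\in P$ or $ab^{-1}\in P^{-1}$. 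Finally (P5) is immediate from (O4).

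For the converse I would define $\preceq$ by the dictionary above and verify (O1)--(O2), (O5)--(O7). Reflexivity (O1) requires $e \in P$: pick any $g \in G$ and use (P3) to write $g = p^{-1}q$ with $p,q\in P$; taking $g = p$ yields $p^{-1}p = e \in P^{-1}\cdot P$, and a similar manipulation together with (P2) shows $e \in P\cap P^{-1} \subseteq P$. Transitivity (O2) is (P1), and left-invariance (O5) is automatic. For (O6), if $y,z \preceq x$ then $y^{-1}z = (y^{-1}x)(z^{-1}x)^{-1} \in P\cdot P^{-1}\subseteq P\cup P^{-1}$ by (P4), so $y$ and $z$ are comparable. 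For (O7), given $x,y$, use (P3) to factor $y^{-1}x = p^{-1}q$ with $p,q\in P$ and set $z := yp^{-1}$; then $z^{-1}y = p \in P$ and $z^{-1}x = py^{-1}x = q \in P$, so $z \preceq y$ and $z \preceq x$. The partial-order case (P2') gives (O3) because $x^{-1}y$ and $y^{-1}x$ lying in $P$ forces both into $P\cap P^{-1} = \{e\}$, and the total case (P5) gives (O4) at once.

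The argument is essentially bookkeeping, so there is no serious obstacle; the only slightly delicate point is recognizing that the \emph{semilinearity} axioms (O6) and (O7) correspond precisely to (P4) and (P3) (the former via the identity $y^{-1}z = (y^{-1}x)(z^{-1}x)^{-1}$, the latter via the explicit common lower bound $z = yp^{-1}$). Once this correspondence is spotted, the verification that the two constructions are mutually inverse is formal: starting from $\preceq$, the cone recovers the preorder through $x\preceq y \Leftrightarrow x^{-1}y \in P$; starting from $P$, the positive cone of the induced preorder is $\{g : e \preceq g\} = \{g : g \in P\} = P$.
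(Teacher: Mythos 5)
Your proposal is correct and takes essentially the same route as the paper: translate everything through $x\preceq y\iff x^{-1}y\in P$, derive (P3) from (O7) applied to $e$ and $g$, derive (P4) from (O6) applied to the inverses, and for the converse verify (O6) via $y^{-1}z=(y^{-1}x)(x^{-1}z)\in P\cdot P^{-1}$ and (O7) via the explicit lower bound coming from a (P3)-factorization. One tiny slip in your (P1) check: from $e\preceq a$ and $e\preceq b$, left-invariance (O5) gives $a\preceq ab$ (left-multiplying $e\preceq b$ by $a$), not $b\preceq ab$, which would require right-invariance; combined with $e\preceq a$ and transitivity this still yields $e\preceq ab$, so the conclusion stands.
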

\begin{proof}$(\Longrightarrow)$
(P1) and (P2) are direct from the definition of  semilinear left preorder.
\medskip

To prove (P3), let $g\in G$. Take $u\in G$ with $u\preceq e$ and $u\preceq g$ by (O7). Then $g=u(u^{-1}g)$ and $e\preceq u^{-1}g$.
So, $G\subset P^{-1}\cdot P$. The inclusion $P^{-1}\cdot P\subset G$ is clear.
\medskip

To prove (P4), let $x, y\in P$. Then $x^{-1}\preceq e$ and $y^{-1}\preceq e$. From (O6), either $x^{-1}\preceq y^{-1}$ or
$y^{-1}\preceq x^{-1}$, which implies $xy^{-1}\in P\cup P^{-1}$.
\medskip

(P2') and (P5) are clear.
\medskip

$(\Longleftarrow)$ Define $x\preceq y$ if $x^{-1}y\in P$. We only check (O6) and (O7), the others are direct.
\medskip

For (O6), let $x, y, z\in G$ be such that $x\preceq z$ and $y\preceq z$. Then $x^{-1}z\in P$ and $z^{-1}y\in P^{-1}$.
By (P4), $x^{-1}y=x^{-1}z\cdot z^{-1}y\in P\cup P^{-1}$. This means either $x\preceq y$ or $y\preceq x$.
\medskip

For (O7), let $x, y\in G$. By (P3), we have $x^{-1}y=a\cdot b$, where $a\in P^{-1}$ and $b\in P$.
Then $a\preceq e$ and $a\preceq x^{-1}y$, which implies $xa\preceq x$ and $xa\preceq y$.

\end{proof}

According to Proposition \ref{char of semi}, we also say that a subsemigroup $P$ of $G$ satisfying (P1)-(P4) is a {\it semilinear left preorder} on $G$.

\begin{lem}\label{preorder}
Let $P$ be a semilinear left preorder on $G$.
\begin{itemize}
\item[(1)] For any $q\in P$, we have $q(P\cup P^{-1})q^{-1}\subset P\cup P^{-1}$.
\item[(2)] For any $g\in G$, we have $\{x\in G: x\preceq g\}=gP^{-1}$.
\item[(3)] Let $w$ be a nontrivial word composed by some elements of $P$. If $w$ is in $P\cap P^{-1}$ then every letter occurring in $w$ is in $P\cap P^{-1}$.
\end{itemize}
\end{lem}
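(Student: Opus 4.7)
The plan is to handle the three items in the order (2), (1), (3), since the arguments grow in complexity.

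For item (2) I would simply unwind the definition of $\preceq$ inherited from $P$ in the converse part of Proposition \ref{char of semi}, namely $x \preceq y$ iff $x^{-1}y \in P$. Then $x \preceq g$ means $x^{-1}g \in P$, which is equivalent to $x = g p^{-1}$ for some $p \in P$, i.e., $x \in gP^{-1}$. This is pure bookkeeping.

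For item (1) I would split on whether $p \in P \cup P^{-1}$ lies in $P$ or in $P^{-1}$ and in each case reduce to (P4). If $p \in P$, then $qp \in P$ by (P1), and $qpq^{-1} = (qp)q^{-1} \in P \cdot P^{-1} \subset P \cup P^{-1}$ by (P4). If instead $p \in P^{-1}$, I would pass to the inverse: $(qpq^{-1})^{-1} = q p^{-1} q^{-1}$ with $p^{-1} \in P$, so the previous case applied to $p^{-1}$ gives $qp^{-1}q^{-1} \in P \cup P^{-1}$; since $P \cup P^{-1}$ is closed under inversion, $qpq^{-1} \in P \cup P^{-1}$ as well.

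For item (3), write $w = p_1 p_2 \cdots p_n$ with each $p_i \in P$. The key observation is the ascending chain
\[
e \preceq p_1 \preceq p_1 p_2 \preceq \cdots \preceq p_1 p_2 \cdots p_n = w,
\]
which holds because $(p_1 \cdots p_k)^{-1}(p_1 \cdots p_{k+1}) = p_{k+1} \in P$. The hypothesis $w \in P \cap P^{-1}$ gives $w \preceq e$, so transitivity squeezes every partial product:
\[
e \preceq p_1 p_2 \cdots p_k \preceq w \preceq e,
\]
forcing $p_1 p_2 \cdots p_k \in P \cap P^{-1}$ for every $k$. Since $P \cap P^{-1}$ is a subgroup by (P2), the quotient $p_k = (p_1 \cdots p_{k-1})^{-1}(p_1 \cdots p_k)$ also lies in $P \cap P^{-1}$, which is the desired conclusion.

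The main conceptual obstacle is item (3): a semilinear left preorder need not be antisymmetric, so one cannot extract $p_k \in P \cap P^{-1}$ from a literal equality; instead one must leverage the subgroup structure of $P \cap P^{-1}$ to absorb every partial product into the neutral kernel. Once the ascending chain is in place, (P2) does the rest. Items (1) and (2) are essentially mechanical applications of (P1)--(P4) and of the definition of $\preceq$.
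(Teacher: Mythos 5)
Your proposal is correct and follows essentially the same route as the paper: item (2) is the same unwinding of the definition, item (1) is the same reduction to (P4) (you handle the $P^{-1}$ case by inversion symmetry where the paper notes directly that $xq^{-1}\in P^{-1}$), and item (3) rests on the same ascending chain of partial products $e\preceq p_1\preceq p_1p_2\preceq\cdots\preceq w$. The only cosmetic difference is how (3) is closed: the paper argues by contradiction, propagating a strict inequality $g_i\succ e$ to conclude $w\succ e$, whereas you squeeze every partial product into $H=P\cap P^{-1}$ and then invoke the subgroup property (P2) to recover each letter as a quotient of consecutive partial products; both closings are valid.
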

\begin{proof}
(1) For any $x\in P$, we have $qxq^{-1}\in qPq^{-1}\subset P\cdot P^{-1}\subset P\cup P^{-1}$ by (P4). For any $x\in P^{-1}$, we have $xq^{-1}\in P^{-1}$.  Thus $qP^{-1}q^{-1}\subset P\cdot P^{-1}\subset P\cup P^{-1}$.

\medskip
(2) If $x\preceq g$ then $g^{-1}x\in P^{-1}$ and hence $x=g(g^{-1}x)\in g P^{-1}$.  For any $y\in P^{-1}$, we have $gy\preceq g$. Thus (2) holds.

\medskip
(3) Set $H=P\cap P^{-1}$ and $w=g_1\cdots g_n$ with $g_1,\cdots,g_n\in P$.
To the contrary, assume that $g_i\notin H$ for some $i\in\{1,\cdots,n\}$.  Then $g_i\succ e$ and
\begin{eqnarray*}
w&=&g_1\cdots g_n\succeq g_1\cdots g_{n-1}\succeq\cdots\succeq g_1\cdots g_{i}\\
&\succ& g_1\cdots g_{i-1}\succeq e.
\end{eqnarray*}
This contradicts that $w$ is in $H$. Hence each $g_i$ is  in $H$.
\end{proof}

\section{Left orderability  and semilinear left preorders}

\begin{defn}
Let $\preceq$ be a semilinear preorder on a set $X$. A subset $F\subset X$ is {\it coinitial} if for any $x\in X$ there is some $y\in F$ with $y\preceq x$.
\end{defn}

For a subset $S$ of a group $G$, we use ${\rm sgr}(S)$ to denote the semigroup generated by $S$.
It was shown in \cite[Theorem 2.7]{Kop} that a group admitting a semilinear left partial order is left-orderable. Now we generalize it to the case of preorder.

\begin{prop} \label{key}
Let $G$ be a group admitting a semilinear left preorder $\preceq$ and let $P$ be its positive cone. Set
$H=P\cap P^{-1}$ and $\widetilde{H}=\bigcup_{p\in P}\bigcap_{q^{-1}\preceq p^{-1}} q^{-1}Hq.$
 Then
\begin{itemize}
\item[(1)] $\widetilde{H}$ is a normal subgroup of $G$;
\item[(2)] if  $G$ is finitely generated and $H\neq G$, then $\widetilde{H}\neq G$;
\item[(3)] the quotient $G/\widetilde{H}$ is left-orderable.
\end{itemize}
\end{prop}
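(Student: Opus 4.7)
The plan is to handle the three claims in sequence, building on the reformulation
\[
\widetilde H \;=\; \bigcup_{p \in P} p^{-1} H_0 p, \qquad H_0 := \bigcap_{r \in P} r^{-1} H r,
\]
obtained by substituting $q = s p$ (with $s \in P$) in the inner intersection. Then $H_0$ is a subgroup, and the identity $s H_0 s^{-1} \subseteq H_0$ for every $s \in P$ follows because $P s \subseteq P$ enlarges the intersection defining $H_0$. This turns $\widetilde H$ into a directed union of subgroups and controls conjugation by positive elements.

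For (1), each $p^{-1} H_0 p$ is a subgroup; the family is directed under inclusion because, given $p_1, p_2 \in P$, property (P4) forces one of $p_2 p_1^{-1}, p_1 p_2^{-1}$ into $P$, say $p_2 = s p_1$ with $s \in P$, whence
\[
p_2^{-1} H_0 p_2 \;=\; p_1^{-1}(s^{-1} H_0 s) p_1 \;\supseteq\; p_1^{-1} H_0 p_1.
\]
Hence $\widetilde H$ is a subgroup. For normality, given $h \in \widetilde H$ with witness $p_0$ (so $p_0 h p_0^{-1} \in H_0$) and an arbitrary $g \in G$, property (P3) lets me write $g p_0^{-1} = u^{-1} v$ with $u, v \in P$, and then $u \in P$ witnesses $g h g^{-1}$ because
\[
(ug)\, h\, (ug)^{-1} \;=\; v\, (p_0 h p_0^{-1})\, v^{-1} \;\in\; v H_0 v^{-1} \;\subseteq\; H_0.
\]
For (2), if $\{s_1, \ldots, s_n\}$ generates $G$ and $\widetilde H = G$, iterating the directedness produces a common $p \in P$ with every $s_i \in p^{-1} H_0 p$; this subgroup then equals $G$, forcing $H_0 = G$ and hence $H = G$, a contradiction.

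For (3), set $\bar P := \pi(P) \subseteq G/\widetilde H$. Properties (P1), (P3), (P4) for $\bar P$ transfer directly from $P$, so by Proposition~\ref{char of semi} the only axiom requiring work is the antisymmetry (P2'): if $p_1, p_2 \in P$ and $p_2 p_1 \in \widetilde H$, then $p_1 \in \widetilde H$. Taking a witness $p_0$ and setting $A := p_0 p_2 p_0^{-1}$, $B := p_0 p_1 p_0^{-1}$, one has $AB \in H_0$ and $A, B \in P \cup P^{-1}$ by Lemma~\ref{preorder}(1); since $r H_0 r^{-1} \subseteq H_0$ for $r \in P$, conjugation preserves the identity, so $(r A r^{-1})(r B r^{-1}) \in H_0 \subseteq H$ for every $r \in P$. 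I then plan to apply Lemma~\ref{preorder}(3) to this two-letter word in $P$-letters to conclude $r A r^{-1}, r B r^{-1} \in H$ for each $r$; intersecting over $r$ gives $A, B \in H_0$, whence $p_1 \in p_0^{-1} H_0 p_0 \subseteq \widetilde H$. Once (P2') is in hand, Proposition~\ref{char of semi} endows $G/\widetilde H$ with a semilinear left partial order whose positive cone is $\bar P$, and \cite[Theorem~2.7]{Kop} supplies left-orderability.

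The main obstacle lies in the case analysis inside (P2'), since Lemma~\ref{preorder}(3) applies only when every letter of the word lies in $P$, whereas $r A r^{-1}$ and $r B r^{-1}$ are a priori only in $P \cup P^{-1}$. The delicate scenario is the mixed-sign case, in which one factor sits in $P \setminus H$ while the other lies in $P^{-1} \setminus H$. My plan for this is two-fold: when both factors lie in $P^{-1}$, I replace the word by its inverse $(r B r^{-1})^{-1} (r A r^{-1})^{-1}$, whose letters now lie in $P$, and apply Lemma~\ref{preorder}(3) there; when one factor is in $P$ and the other in $P^{-1}$, I pass to a larger $r' \in P$ via (P4) (using that $H_0$ is stable under $P$-conjugation from within) until both conjugates land on the same side of the order, reducing to the clean case.
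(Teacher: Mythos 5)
Your reformulation $\widetilde H=\bigcup_{p\in P}p^{-1}H_0p$ with $H_0=\bigcap_{r\in P}r^{-1}Hr$ is correct, and your proofs of (1) and (2) via directedness of the family $\{p^{-1}H_0p\}$ are sound and essentially equivalent to the paper's. The problem is (3): the intermediate claim on which your whole argument rests --- that $\bar P=\pi(P)$ satisfies (P2$'$), i.e.\ that $p_1,p_2\in P$ and $p_2p_1\in\widetilde H$ force $p_1\in\widetilde H$ --- is false. Take $G\leq\mathrm{Homeo}_+(\mathbb{R})$ generated by $a(x)=x+1$, by $c$ with $c(0)=1$ and $c(x)=x-1$ for $x\leq-10$, and by $d$ with $d(0)=0$ and $d(x)=x+1$ for $x\leq-15$, with $P=\{g:g(0)\geq 0\}$ (this is the positive cone of a total, hence semilinear, left preorder: $H=\mathrm{Stab}(0)$, and one checks $\widetilde H=\{g:\exists\text{ an orbit point }v\leq 0\text{ with }g\text{ fixing every orbit point}\leq v\}$). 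Then $c,d\in P$ and $dc$ fixes all of $(-\infty,-14]$, so $dc\in\widetilde H$; but $c$ translates leftward near $-\infty$ and fixes no orbit point below $-10$, so $c\notin\widetilde H$. Hence $\bar c=\bar d^{-1}$ is a nontrivial element of $\bar P\cap\bar P^{-1}$, and $\pi(P)$ is not the positive cone of any partial order on $G/\widetilde H$.

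This also shows that your "mixed case" is not a removable technicality: in the example above one computes $rAr^{-1}\in P^{++}$ and $rBr^{-1}\in P^{--}$ for \emph{every} sufficiently small $r^{-1}$, so no passage to a larger conjugator ever lands both letters on the same side, and Lemma~\ref{preorder}(3) never applies. The paper's proof is built precisely to survive this phenomenon: instead of pushing $P$ to the quotient, it proves by induction on $n$ that for any $x_1,\dots,x_n\notin\widetilde H$ one can choose signs $\varepsilon_i$ so that $\mathrm{sgr}(x_1^{\varepsilon_1},\dots,x_n^{\varepsilon_n})$ misses $\widetilde H$ (the induction step splits the indices according to whether $qx_i^{\varepsilon_i}q^{-1}$ lands in $H$ or in $P^{++}$ on a coinitial set, which is exactly the mixed situation), and then a compactness argument produces a coherent sign function whose positive set $\widetilde P$ --- which in the example must separate $\bar c$ from $\bar d$ even though both lie in $\pi(P)$ --- defines the left order on $G/\widetilde H$. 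To repair your write-up you would need to replace the (P2$'$) step by an argument of this combinatorial type; parts (1) and (2) can stand as written.
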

\begin{proof}
(1) From (P2), $H$ is a subgroup of $G$; thus $x\in \widetilde{H}$ implies $x^{-1}\in \widetilde{H}$ by the definition. Let $x,y\in  \widetilde{H}$. Suppose that $x\in \bigcap_{q^{-1}\preceq p_{1}^{-1}} q^{-1}Hq$ and $y\in \bigcap_{q^{-1}\preceq p_{2}^{-1}} q^{-1}Hq$ for some $p_1,p_2\in P$. Since $p_1^{-1}$ and $p_2^{-1}$ are comparable
by (O6), we may assume that $p_1^{-1}\preceq p_{2}^{-1}$. Thus $y\in \bigcap_{q^{-1}\preceq p_{2}^{-1}} q^{-1}Hq\subseteq \bigcap_{q^{-1}\preceq p_{1}^{-1}} q^{-1}Hq$ and hence $xy\in \bigcap_{q^{-1}\preceq p_{1}^{-1}} q^{-1}Hq\subset \widetilde{H}$. So $\widetilde{H}$ is a group.

 For each $p\in P$ and $g\in G$, from Lemma \ref{preorder} (2),
 \begin{eqnarray*}
 && g\left(\cap_{q^{-1}\preceq p^{-1}} q^{-1}Hq\right)g^{-1}=g\left(\cap_{x\in p^{-1}P^{-1}} xHx^{-1}\right)g^{-1}\\
 &&= \cap_{x\in p^{-1}P^{-1}} gxH(gx)^{-1}
 =\cap_{x\in gp^{-1}P^{-1}} xHx^{-1} \\
 && =  \cap_{x\preceq gp^{-1}} xHx^{-1} \subset \cap_{x\preceq y^{-1}} xHx^{-1}\\
 &&\subset \widetilde{H},
  \end{eqnarray*}
  where $y^{-1}$ is some element in $P^{-1}$ with $y^{-1}\preceq gp^{-1}$. Thus $\widetilde{H}$ is  normal in $G$.

\medskip
(2) Suppose that $G$ is finitely generated and $\{g_1,\cdots, g_n\}$ is a finite set of generators. To the contrary, assume that $G=\widetilde{H}$. Then, for each $i\in\{1,\cdots,n\}$, there is some $p_{i}\in P$ such that $g_i\in \cap_{q^{-1}\preceq p_{i}^{-1}}q^{-1}Hq$. Take some $p\in P$ with $p^{-1}\preceq p_{i}^{-1}$ for each $i\in\{1,\cdots,n\}$. Then we have
\[\{g_1,\cdots, g_n\}\subset \cap_{q^{-1}\preceq p^{-1}}q^{-1}Hq\subset p^{-1}Hp.\]
Thus $G\subset p^{-1}Hp$ and hence $G=H$.  So (2) holds.

\medskip
(3) We may assume that $H\neq G$; otherwise,  $\widetilde{H}=G$ and the conclusion is trivial.

\medskip
{\bf Claim 1.} For any finitely many $x_1,\cdots, x_n\in G\setminus\widetilde{H}$, there are some $\varepsilon_1,\cdots, \varepsilon_n\in\{-1,1\}$ such that
\begin{equation*}
 {\rm sgr}(x_1^{\varepsilon_1},\cdots,x_n^{\varepsilon_n})\cap \widetilde{H}=\emptyset.
\end{equation*}

\medskip
We show the Claim 1 by induction on $n$. Given $x\in G\setminus \widetilde{H}$,  suppose that there is a positive integer $k$ such that $x^{k}\in \widetilde{H}$. Then $x^{k}\in \bigcap_{q^{-1}\preceq p^{-1}} q^{-1}Hq$, by the definition of $\widetilde{H}$, for some $p\in P$ with $p^{-1}\preceq x$.  Let $q^{-1}\preceq p^{-1}$ be given.  Then $q^{-1}\preceq x$ and hence $qxq^{-1}\in P\cup P^{-1}$ by (P4). WLOG, we may assume that $qxq^{-1}\in P$.   Then $qx^{k}q^{-1}\in H$ implies that $qxq^{-1}\in H$, by Lemma \ref{preorder} (3). Thus $x\in \bigcap_{q^{-1}\preceq p^{-1}} q^{-1}Hq$ whence $x\in \widetilde{H}$. This contradicts the assumption.  So the Claim 1 holds for $n=1$.

Now assume that $n\geq 2$ and the Claim 1 holds for any $y_1,\cdots, y_m\in G\setminus\widetilde{H}$ with $m<n$.
By (O7), there is some $p\in P$ with
\begin{equation*}
p^{-1}\preceq x_1, \cdots, p^{-1}\preceq x_n.
\end{equation*}
Then $qx_1,\cdots, qx_n\in P$, for each $q^{-1}\preceq p^{-1}$. According to (P4) in Proposition \ref{char of semi}, we have $\{q x_1q^{-1}, \cdots, qx_nq^{-1}\}\subset P\cup P^{-1}$.
Thus  there are some $\vec{\varepsilon}(q)=(\varepsilon_1(q),\cdots, \varepsilon_n(q))\in\{-1,1\}^{n}$ such that
$\{q x_1^{\varepsilon_1(q)}q^{-1}, \cdots, qx_n^{\varepsilon_n(q)}q^{-1}\}\subset  P$.
For each $\vec{\varepsilon}=(\varepsilon_1,\cdots, \varepsilon_n)\in\{-1,1\}^{n}$, let
\[Q(\vec{\varepsilon})=\left\{q^{-1}\preceq p^{-1}: \{q x_1^{\varepsilon_1}q^{-1}, \cdots, qx_n^{\varepsilon_n}q^{-1}\}\subset  P\right\}.\]
Then $\{q^{-1}: q^{-1}\preceq p^{-1}\}=\bigcup_{\vec{\varepsilon}\in \{-1,1\}^{n}}Q(\vec{\varepsilon})$.

Let $P^{++}=\{g\in G: g\succ e\}$  and $P^{--}=\{g\in G: g\prec e\}$ be the strictly positive and negative cones respectively. Now we discuss into two cases.

\medskip
{\bf Case 1}. There is an $\vec{\varepsilon}=(\varepsilon_1,\cdots, \varepsilon_n)\in\{-1,1\}^{n}$ and a coinitial set $Q\subset Q(\varepsilon)$ such that for any $q^{-1}\in Q$,
\[\{q x_1^{\varepsilon_1}q^{-1}, \cdots, qx_n^{\varepsilon_n}q^{-1}\}\subset   P^{++}.\]
Noting that any word composed of $qx_1^{\varepsilon_1}q^{-1},\cdots, qx_n^{\varepsilon_n}q^{-1}$ will lie in $P^{++}$ for any $q^{-1}\in Q$ and  $Q$ is a coinitial set,  the semigroup $ {\rm sgr}(x_1^{\varepsilon_1},\cdots,x_n^{\varepsilon_n})$ has empty intersection with $\widetilde{H}$. Then the Claim holds.

\medskip
{\bf Case 2}.  For any $\vec{\varepsilon}=(\varepsilon_1,\cdots, \varepsilon_n)\in\{-1,1\}^{n}$, there is some $p^{-1}(\vec{\varepsilon})\preceq p^{-1}$ such that  for any $q^{-1}\in\{q^{-1}\in Q(\vec{\varepsilon}): q^{-1}\preceq p^{-1}(\vec{\varepsilon})\}$, we have
\[ \left\{i\in\{1,\cdots,n\}: qx_{i}^{\varepsilon_i}q^{-1}\in H\right\}\neq \emptyset.\]
Thus, for each $q^{-1}\in Q(\vec{\varepsilon})$, there is a partition $\{1,\cdots,n\}=A(q, \varepsilon)\cup B(q, \varepsilon) $ such that
\[ \{ qx_{i}^{\varepsilon_i}q^{-1}: i\in A(q, \varepsilon)\}\subset H\ \ \text{and }\ \ \{ qx_{i}^{\varepsilon_i}q^{-1}: i\in B(q, \varepsilon)\}\subset P^{++}.\]

Since $H\neq G$ by the assumption, we have $P^{++}=P\setminus H\neq \emptyset$ and hence $P^{++}$ is infinite.  Note that there are only finitely many partitions of $\{1,\cdots, n\}$. So there is an $\vec{\varepsilon}=(\varepsilon_1,\cdots, \varepsilon_n)\in\{-1,1\}^{n}$, a coinitial set $Q\subset Q(\vec{\varepsilon})$ and  a partition $\{1,\cdots,n\}=A \cup B $ with $A\neq\emptyset, B\neq\emptyset$ such that
\[ \{ qx_{i}^{\varepsilon_i}q^{-1}: i\in A \}\subset H\ \ \text{and }\ \ \{ qx_{i}^{\varepsilon_i}q^{-1}: i\in B \}\subset P^{++},\]
for each $q^{-1}\in Q$. WLOG, we may assume that $A=\{1,\cdots,k\}$ and $B=\{k+1,\cdots,n\}$ for some $k\in\{1,\cdots, n-1\}$.

Now by the induction hypothesis, there is some $\vec{\eta}=(\eta_1,\cdots,\eta_{k})\in\{-1,1\}^{k}$ such that
\[ {\rm sgr}(x_1^{\eta_1},\cdots,x_k^{\eta_k})\cap \widetilde{H}=\emptyset. \]
Then we conclude that $(\eta_1,\cdots,\eta_{k}, \varepsilon_{k+1},\cdots, \varepsilon_{n})\in\{-1,1\}^{n}$  satisfies
 \[ {\rm sgr}(x_1^{\eta_1},\cdots,x_k^{\eta_k}, x_{k+1}^{\varepsilon_{k+1}}, \cdots, x_{n}^{\varepsilon_{n}})\cap \widetilde{H}=\emptyset. \]
Indeed, let $w$ be a word  composed of $x_1^{\eta_1},\cdots,x_k^{\eta_k}, x_{k+1}^{\varepsilon_{k+1}}, \cdots, x_{n}^{\varepsilon_{n}}$.  If $x_{k+1}^{\varepsilon_{k+1}}, \cdots, x_{n}^{\varepsilon_{n}}$  do not occur in $w$, then the choice of $\vec{\eta}$ implies that $w$ is not in $\widetilde{H}$. If there are some letters of $x_{k+1}^{\varepsilon_{k+1}}, \cdots, x_{n}^{\varepsilon_{n}}$ occur in $w$, then for each $q^{-1}\in Q$, $qwq^{-1}\in P^{++}$ and hence $w$ is not in  $\widetilde{H}$ by the coinitiality of $Q$. Thus we complete the proof of the Claim 1.

\medskip
Now (3) is followed from some standard arguments (see \cite[Lemma 2.2.3]{Glass}). For convenience of the readers, we afford a detailed proof here.

\medskip
 By the principe of compactness, the Claim 1 implies the following directly.
\medskip

{\bf Claim 2}. There is a map $\varepsilon: G\setminus \widetilde{H}\rightarrow \{-1,1\}$ such that for any finite  $g_1,\cdots,g_n\subset G\setminus \widetilde{H}$, ${\rm sgr}\left(g_1^{\varepsilon(g_1)}, \cdots, g_{n}^{\varepsilon(g_n)}\right)\cap \widetilde{H}=\emptyset$.

\medskip
Let $\widetilde{P}=\{ g\in G\setminus\widetilde{H}: \varepsilon(g)=1\}$. Then it  is easy to verify that $\widetilde{P}$ is a subsemigroup of $G$ and $G=\widetilde{P}\cup \widetilde{H}\cup\widetilde{P}^{-1}$.
\medskip

{\bf Claim 3}.  $\widetilde{P}=\widetilde{H}\widetilde{P}\widetilde{H}$.
\medskip

 First $\widetilde{P}=e\widetilde{P}e\subset \widetilde{H}\widetilde{P}\widetilde{H}$. To show the converse, we conclude that for any $h\in \widetilde{H}$ and $q\in \widetilde{P}$,  $hq\in \widetilde{P}$. Otherwise,  $\varepsilon(hq)=-1$. Then $h^{-1}\in {\rm sgr}((hq)^{-1}, q)\cap \widetilde{H}$, which contradicts the choice of $\varepsilon$. Similarly, $qh\in \widetilde{P}$. Thus for any $h_1,h_2\in \widetilde{H}$ and $q\in \widetilde{P}$, $h_1qh_2\in \widetilde{P}$ and hence $\widetilde{H}\widetilde{P}\widetilde{H}\subset \widetilde{P}$. Thus Claim 3 holds.

\medskip
We define an order $\leq$ on the quotient group $G/\widetilde{H}=\{g\widetilde{H}: g\in G\}$ by
\[ f\widetilde{H}<g\widetilde{H}\ \ \ \text{ if and only if }\ \ \ f^{-1}g\in \widetilde{P}.\]
It is well defined by Claim 3 and is a total order by the equality $G=\widetilde{P}\cup \widetilde{H}\cup \widetilde{P}^{-1}$. It is obvious that $\leq$ is $G$-invariant, i.e. for any $g,x,y\in G$, $gx\widetilde{H}<gy\widetilde{H}$ whenever $x\widetilde{H}<y\widetilde{H}$.   Thus the quotient   $G/\widetilde{H}$ is left-orderable.

\end{proof}

\section{Existence of pointwise fixed arcs}

The following is the well-known Margulis' Normal Subgroup Theorem (see e.g. \cite[Theorem 8.1.2]{Zim}).
\begin{lem}\label{NST}
Let $G$ be a connected real semisimple Lie group with finite center and no compact factors, let   $\Gamma$ be an irreducible lattice of $G$.  Assume that $\mathbb{R}$-{\rm rank}$(G)\geq 2$.  Then every normal subgroup of $\Gamma$ either is contained in the center of $G$ and hence is finite or has finite index in $\Gamma$.
\end{lem}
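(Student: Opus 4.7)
The plan is to follow Margulis' classical strategy, pitting Kazhdan's property (T) against amenability derived from boundary theory. Let $N$ be a normal subgroup of $\Gamma$. If $N$ is contained in the (finite) center of $G$ then $N$ itself is finite and we are done, so I may assume $N$ is not central; it then suffices to show that the quotient $\Lambda:=\Gamma/N$ is finite.

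The first step is to invoke Kazhdan's property (T) for $\Gamma$. Since $G$ is semisimple with $\mathbb{R}$-rank at least two, every irreducible lattice in $G$ has property (T), and property (T) descends to arbitrary quotients, so $\Lambda$ has (T) as well. As $\Lambda$ is discrete, it is enough to prove that $\Lambda$ is amenable, because a discrete Kazhdan group that is amenable is automatically finite.

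The second and principal step is to establish the amenability of $\Lambda$ via the Furstenberg boundary. Let $P\leq G$ be a minimal parabolic subgroup; then $P$ is amenable and $B=G/P$ is the Poisson--Furstenberg boundary of $G$, on which $\Gamma$ acts amenably in the sense of Zimmer. Amenability of this $\Gamma$-action produces, for every compact metrizable $\Lambda$-space $K$ on which $\Gamma$ acts through the quotient map, a $\Gamma$-equivariant measurable map from $B$ into the space of probability measures on $K$. Choosing $K$ to be a suitable compact model for the left $\Lambda$-action on itself gives a $\Gamma$-equivariant measurable factor of $B$ on which $N$ acts trivially. Margulis' factor theorem, whose proof requires the higher-rank hypothesis $\mathbb{R}$-rank$(G)\geq 2$, classifies $\Gamma$-equivariant measurable factors of $B$ as being of the form $G/Q$ for some parabolic subgroup $Q\supseteq P$; the non-centrality of $N$, combined with a Borel density argument, rules out every proper $Q$ and forces $Q=G$. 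Hence the factor is trivial, which unwinds to a $\Lambda$-invariant probability measure witnessing the amenability of $\Lambda$.

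The hardest part of this plan, and indeed the technical heart of the whole theorem, is the invocation of Margulis' factor theorem for measurable $\Gamma$-equivariant quotients of $G/P$. Its proof relies on the Mautner phenomenon, a delicate study of conditional measures along horospherical unipotent subgroups of $G$, and essential use of higher rank; this is exactly why the conclusion fails when $\mathbb{R}$-rank$(G)=1$. The remaining ingredients---property (T) for higher-rank lattices, amenability of minimal parabolics, and finiteness of amenable discrete Kazhdan groups---are standard and assemble cleanly once the factor theorem is granted, which is why in practice one simply cites the result as in \cite{Zim}.
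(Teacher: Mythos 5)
The paper does not actually prove this lemma: it is Margulis' Normal Subgroup Theorem, quoted verbatim from Zimmer's book, so there is no in-paper argument to compare against. Your sketch follows the standard Margulis strategy (show $\Gamma/N$ is both amenable and Kazhdan, hence finite), and the amenability half --- equivariant maps from $G/P$ into $\mathrm{Prob}(K)$, the factor theorem classifying measurable $\Gamma$-quotients of $G/P$ as $G/Q$, and the exclusion of proper $Q$ because $N$ would then be central --- is a correct outline of how that half goes.

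There is, however, a genuine error in your first step. It is \emph{not} true that $\mathbb{R}\text{-rank}(G)\geq 2$ forces every irreducible lattice in $G$ to have property (T). Property (T) for a semisimple $G$ requires every simple factor to have it, and the hypotheses here allow rank-one factors: $G=\mathrm{SL}(2,\mathbb{R})\times\mathrm{SL}(2,\mathbb{R})$ has $\mathbb{R}$-rank $2$ but fails (T) (it surjects onto $\mathrm{SL}(2,\mathbb{R})$), and its irreducible lattices such as $\mathrm{SL}(2,\mathbb{Z}[\sqrt{2}])$ therefore do not have property (T) either. So for precisely the class of groups covered by the lemma your step one breaks down. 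In Margulis' actual proof this is repaired by a separate hard theorem: for any \emph{non-central normal} $N$ in such a $\Gamma$, the quotient $\Gamma/N$ has property (T) even when $\Gamma$ itself does not; the proof uses the product structure of $G$ and the Howe--Moore/Mautner machinery and is not a formal consequence of (T) descending to quotients. With that substitution your outline assembles correctly, but as written the argument is incomplete for a genuinely relevant family of higher rank lattices.
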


The following is the Lemma 2.1 in \cite{SX}

\begin{lem} \label{fixed point}
Let $X$ a nondegenerate dendrite and $h:X\rightarrow X$ be a homeomorphism. If $h$ fixes an end point $e\in X$, then $h$ fixes another point $o\not=e$.
\end{lem}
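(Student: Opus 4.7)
I argue by contradiction: assume $\mathrm{Fix}(h)=\{e\}$. The strategy is to produce an $h$-invariant subdendrite $D\subseteq X$ with $e\notin D$; Ward's fixed-point theorem (every continuous self-map of a dendrite has a fixed point) applied to $h|_{D}$ then yields a fixed point of $h$ in $D\subseteq X\setminus\{e\}$, contradicting the assumption. The key geometric fact I use throughout is that $e$ is an endpoint of $X$, which forbids any arc in $X$ from passing through $e$ as an interior point; consequently the dendritic convex hull of any subset of $X\setminus\{e\}$ stays inside $X\setminus\{e\}$.

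I would first eliminate non-trivial periodic points. If $p\neq e$ satisfied $h^{k}(p)=p$ with minimal period $k\geq 1$, the orbit $\{p,h(p),\dots,h^{k-1}(p)\}\subseteq X\setminus\{e\}$ would have convex hull a finite $h$-invariant subdendrite of $X\setminus\{e\}$, on which Ward's theorem delivers a fixed point of $h$; for example, when $k=2$, $h$ swaps the endpoints of the $h$-invariant arc $[p,h(p)]$ and so fixes its midpoint. Hence every $h$-orbit in $X\setminus\{e\}$ is infinite. Next, for any $x_{0}\in X\setminus\{e\}$, the $\omega$-limit set $\omega(x_{0})$ is a nonempty compact $h$-invariant subset of $X$; if $\omega(x_{0})\not\ni e$, then $\omega(x_{0})\subseteq X\setminus\{e\}$, its convex hull is an $h$-invariant subdendrite avoiding $e$, and Ward's theorem again provides the desired fixed point. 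Thus $\omega(x_{0})\ni e$ for every $x_{0}$, and the analogous argument applied to $h^{-1}$ gives $\alpha(x_{0})\ni e$: every orbit must accumulate at $e$ in both forward and backward time.

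The main obstacle is to derive a contradiction from this last case. To attack it, I would equip $X$ with the rooted partial order $x\preceq y\iff x\in[e,y]$, which is preserved by $h$ (since $h([e,y])=[e,h(y)]$), has $e$ as its unique minimum, and, by the endpoint property, satisfies $x\wedge y\neq e$ for all $x,y\in X\setminus\{e\}$. The continuous meet map $m(x)=x\wedge h(x)$ then satisfies $m(x)\preceq x$, $m(e)=e$, and $m(x)\neq e$ for $x\neq e$; its fixed-point set equals $\{x:x\preceq h(x)\}$, and similarly for the dual map $m'(x)=x\wedge h^{-1}(x)$. If either $m$ or $m'$ had a fixed point $y\neq e$, then iterating $h$ (respectively $h^{-1}$) from $y$ would produce a $\preceq$-monotone chain along a single arc from $e$, whose limit is a fixed point of $h$ strictly above $e$, contradicting $\mathrm{Fix}(h)=\{e\}$. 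The delicate remaining sub-case is when both $m$ and $m'$ have $e$ as their unique fixed point — equivalently, every $x\neq e$ is $\preceq$-incomparable with both $h(x)$ and $h^{-1}(x)$. This would force $h$ simultaneously to attract and to repel a small neighborhood of $e$, a configuration ultimately incompatible with $h$ being a homeomorphism of the compact dendrite together with the universal accumulation at $e$ established above, yielding the final contradiction.
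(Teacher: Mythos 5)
The paper does not reprove this lemma (it is quoted as Lemma 2.1 of \cite{SX}), so I can only measure your argument against the standard order-theoretic proof. Your central idea --- introduce the root order $x\preceq y\iff x\in[e,y]$, note that $h$ preserves it and that $x\wedge y\neq e$ for all $x,y\neq e$ because the end point $e$ cannot separate $X$, and then iterate a point comparable with its image to obtain a $\preceq$-monotone chain converging to a fixed point strictly above $e$ --- is exactly the right mechanism, and that portion of your write-up is sound. The problem is that you do not finish: the ``delicate remaining sub-case,'' in which every $x\neq e$ is incomparable with both $h(x)$ and $h^{-1}(x)$, is dismissed with the assertion that $h$ would have to ``simultaneously attract and repel a small neighborhood of $e$,'' which is a heuristic, not an argument. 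As written, the proof has a genuine gap precisely at the step that decides the lemma.

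The gap is easy to close, and closing it also renders the first half of your proposal (periodic orbits, $\omega$- and $\alpha$-limit sets, Ward's theorem on invariant hulls) unnecessary. The remaining sub-case is in fact vacuous: take any $b\neq e$ and set $m=b\wedge h(b)$, so that $m\neq e$. From $m\preceq b$ and invariance of the order under $h$ we get $h(m)\preceq h(b)$, while $m\preceq h(b)$ holds by definition of the meet; hence $m$ and $h(m)$ both lie on the arc $[e,h(b)]$, which is totally ordered by $\preceq$, so they are comparable. Thus there always exists $y\neq e$ with either $y\preceq h(y)$ or $y\preceq h^{-1}(y)$, and your monotone-iteration step (applied to $h$ or to $h^{-1}$, either of which fixes a second point iff the other does) concludes the proof. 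You should also record the two small facts the iteration uses: the relation $\preceq$ is closed in $X\times X$, so a $\preceq$-monotone sequence lies in a single arc $[e,z]$ and converges, and its limit is then fixed by $h$ and is $\succeq y\succ e$.
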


The following is key to establish the structure theorem.

\begin{thm}\label{end fixed}
Let $\Gamma$ be a higher rank lattice acting on a nondegenerate dendrite $X$. If $\Gamma$ fixes some end point $z$ of $X$, then there is another point $s\in X$ such that $\Gamma$ fixes the arc $[z,s]$ pointwise.
\end{thm}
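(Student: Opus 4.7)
The plan is to analyze the subset
\[
\Gamma^+ := \{g \in \Gamma : g|_{[z,s]} = \mathrm{id} \text{ for some } s \in X \setminus \{z\}\}
\]
via Margulis' Normal Subgroup Theorem (Lemma \ref{NST}). The key geometric input is the end-point property of $z$: since $X \setminus \{z\}$ is connected, any two arcs $[z,p], [z,q]$ with $p, q \neq z$ must share a nontrivial common initial segment $[z, p \wedge q]$ with $p \wedge q \neq z$ (otherwise the unique arc $[p,q] = [p,z] \cup [z,q]$ would pass through $z$, contradicting the connectedness of $X \setminus \{z\}$). Consequently any finite intersection of such arcs is again a nontrivial arc from $z$. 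From this, $\Gamma^+$ is a subgroup (if $g,h$ fix $[z,s_1]$ and $[z,s_2]$ respectively, then $gh$ fixes $[z, s_1 \wedge s_2]$), and conjugation by $k$ sends a fixed arc $[z,s]$ to $[z, ks]$, so $\Gamma^+$ is normal. Lemma \ref{NST} then forces $\Gamma^+$ to be either finite (and central in $\Gamma$), or of finite index in $\Gamma$.

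First I would handle the case that $\Gamma^+$ has finite index. Since $\Gamma$, and hence $\Gamma^+$, is finitely generated, take generators $g_1, \ldots, g_n$ together with fixed arcs $[z, s_i]$; the end-point property yields $\bigcap_i [z, s_i] = [z, s]$ with $s \neq z$, fixed pointwise by all of $\Gamma^+$. The stabilizer of $s$ in $\Gamma$ then contains $\Gamma^+$ and so has finite index, making the orbit $\Gamma \cdot s$ finite. Intersecting the arcs $[z, gs]$ over this finite orbit yields a nontrivial $\Gamma$-invariant arc $[z, s^*]$, again by the end-point property. Both endpoints of $[z,s^*]$ are $\Gamma$-fixed (since $z$ is and $s^*$ is the other endpoint of a $\Gamma$-invariant arc), so $\Gamma$ acts on $[z, s^*] \cong [0,1]$ by orientation-preserving homeomorphisms, inducing $\phi: \Gamma \to \mathrm{Homeo}^+([0,1])$ with kernel containing $\Gamma^+$. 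The image is therefore a finite subgroup of the left-orderable group $\mathrm{Homeo}^+([0,1])$, hence trivial (left-orderable groups are torsion-free). So $\Gamma$ fixes $[z, s^*]$ pointwise and one sets $s := s^*$.

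Next I would rule out the possibility that $\Gamma^+$ is finite, by constructing a semilinear left preorder on $\Gamma$ and combining Proposition \ref{key} with Deroin-Hurtado's theorem (higher rank lattices are not left-orderable). The tree order $\leq$ on $X$ with root $z$ is $\Gamma$-invariant; fixing $x_0 \in X \setminus \{z\}$ with nontrivial orbit, the candidate preorder is $g \preceq h$ iff $g x_0 \leq h x_0$. Axioms (O1), (O2), (O5) are immediate and (O6) holds because the points tree-below any given point form a totally ordered arc. Granting (O7), Proposition \ref{key} yields a proper normal subgroup $\widetilde{H} \subsetneq \Gamma$ with $\Gamma / \widetilde{H}$ left-orderable. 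A second application of Lemma \ref{NST} to $\widetilde{H}$ splits into two subcases, each impossible: if $\widetilde{H}$ has finite index, $\Gamma/\widetilde{H}$ is a finite left-orderable group, hence trivial, contradicting $\widetilde{H} \neq \Gamma$; if $\widetilde{H}$ is finite and central, residual finiteness of $\Gamma$ produces a finite-index subgroup $\Gamma_0 \leq \Gamma$ with $\Gamma_0 \cap \widetilde{H} = \{e\}$, embedding $\Gamma_0$ into the left-orderable $\Gamma/\widetilde{H}$ and contradicting Deroin-Hurtado applied to the higher rank lattice $\Gamma_0$.

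The main obstacle is axiom (O7) for the preorder: it demands, for any $g, h$, an element $f$ with $f x_0$ tree-below both $g x_0$ and $h x_0$, i.e., coinitiality of the orbit $\Gamma x_0$ in the tree order. This is strictly stronger than saying the orbit's tree-order infimum $\bigwedge_{g} g x_0$ equals $z$, and simple examples (a broom dendrite with a group permuting spokes) show coinitiality can fail for general actions even when the infimum is $z$. The passage from ``$\Gamma^+$ is finite'' to coinitiality must therefore exploit the higher rank lattice structure in an essential way — likely by first restricting to a well-chosen $\Gamma$-invariant subdendrite on which orbits accumulate toward $z$, or by refining the candidate preorder using a subtler invariant of the germ of the action at $z$.
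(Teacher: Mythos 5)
Your Case A (where $\Gamma^+$ has finite index) is correct and self-contained: the end-point property of $z$ does make $\Gamma^+$ a normal subgroup, the finite orbit of $s$ does yield a nondegenerate $\Gamma$-invariant arc $[z,s^*]$, and the induced finite group in $\mathrm{Homeo}^+([0,1])$ is trivial. But the proof as a whole has a genuine gap, which you yourself flag: in Case B you never establish axiom (O7) for your candidate preorder $g\preceq h \Leftrightarrow gx_0\le hx_0$, and without (O7) Proposition \ref{key} cannot be invoked, so the case ``$\Gamma^+$ finite'' is not ruled out. This is not a peripheral technicality; coinitiality of an orbit toward $z$ can genuinely fail, and the entire difficulty of the theorem is concentrated at exactly this point.

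The paper resolves it with a more careful choice of base point and a different dichotomy. By Lemma \ref{fixed point} each generator $g_i$ fixes some $x_i\neq z$; one sets $[z,t]=\bigcap_{i}[z,x_i]$ (nondegenerate by your end-point property) and $P=\{g\in\Gamma: g^{-1}(t)\in[z,t]\}$. Two features of this $t$ matter: (i) each $g_i$ preserves the arc $[z,x_i]\ni t$, so $g_i(t)$ and $t$ are comparable and hence $g_i\in P\cup P^{-1}$; (ii) all points $h(t)$ with $h\in P^{-1}$ lie on the single arc $[z,t]$. Then either some sequence $h_n(t)$, $h_n\in P^{-1}$, converges to $z$ --- in which case, being confined to $[z,t]$, these points eventually enter every initial segment $[z,u]$ with $u\neq z$, which is exactly (O7), and the Proposition \ref{key}/Normal Subgroup Theorem/Deroin--Hurtado argument (your Case B machinery, which is otherwise sound) forces $H=\Gamma$, i.e.\ $t$ is a global fixed point --- or else $s:=\inf\{h(t):h\in P^{-1}\}$ lies in $(z,t]$, and a direct limit argument shows this infimum is fixed by $P$ and hence, by (i), by all of $\Gamma$. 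In either branch one gets a global fixed point $s\neq z$, after which pointwise fixing of $[z,s]$ follows as in your Case A. So the missing idea is precisely the ``infimum is a fixed point'' alternative when coinitiality fails; an arbitrary $x_0$ with nontrivial orbit provides neither (i) nor (ii), and your proposal leaves that alternative unaddressed.
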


\begin{proof}
Now take a finite set $\{g_1,\cdots,g_n\}$ of generators of $\Gamma$. For each $i\in \{1,\cdots, n\}$, by Lemma \ref{fixed point}, we can take  $x_i\in X\setminus\{z\}$ fixed by $g_i$. Let $t$ be the  point such that $[z,t]=\cap_{i=1}^{n}[z,x_i]$. Define a positive cone $P$ of $\Gamma$ by
\[ P=\left\{g\in \Gamma: g^{-1}(t)\in [z,t]\right\}.\]
Then $P$ leads to a preorder $\preceq$ on $\Gamma$. Let $H=P\cap P^{-1}$ and $\widetilde{H}=\bigcup_{p\in P}\bigcap_{q^{-1}\preceq p^{-1}} q^{-1}Hq$. Note that $H$ is just the stabilizer of $t$ in $\Gamma$.

\medskip
{\bf Claim}. There is some point $s\in (z,t]$ fixed by $\Gamma$.
\medskip

We discuss into two cases for the proof of the Claim.

\medskip
{\bf Case 1}. There is a sequence $(f_i)$ in $P^{-1}$ such that $f_i(t)\rightarrow z$ as $i\rightarrow \infty$.
Then $P$ gives rise to a semilinear left preorder $\preceq$ on $\Gamma$: for $g, h\in\Gamma$,
$$g\preceq h\  \mbox{if and only if}\  [z, g(t)]\subset [z, h(t)].$$
By Proposition \ref{key}, $\Gamma$ admits a left-orderable quotient $\Gamma/\widetilde{H}$.  By Lemma \ref{NST}, either $\widetilde{H}$ is contained in the center of $G$ or $\Gamma/\widetilde{H}$ is a finite group, where $G$ is the ambient Lie group of $\Gamma$.  In the former case, $\Gamma/\widetilde{H}$ is also a higher rank lattice, which
contradicts Deroin-Hurtado's theorem in \cite{DH}.  In the latter case, $\Gamma/\widetilde{H}$ is a finite group. Then the orderability implies that it is trivial and hence $\widetilde{H}=\Gamma$. Then we have $H=\Gamma$, by Proposition \ref{key} (2). Thus $t$ is fixed by $\Gamma$ and take $s=t$.

\medskip
{\bf Case 2}. There is no sequence $(f_i)$ in $P^{-1}$ with $f_i(t)\rightarrow z$ as $i\rightarrow \infty$.

Fix a canonical ordering $<$ on $[z,t]$ with $z<t$. Let $s=\inf_{g^{-1}\in P^{-1}} g(t)$. Then $s\in (z,t]$ and we claim that $s$ is fixed by $\Gamma$. Indeed, let $g^{-1}\in P^{-1}$ be given. Suppose that $s=\lim\limits_{n\rightarrow\infty} \gamma_{n}^{-1}(t)$ for some sequence $(\gamma_n^{-1})$ in $P^{-1}$. Then $g^{-1}(s)\geq s$ with respect to the canonical ordering $<$ on $[z,t]$. If $g^{-1}(s)\neq s$ then $g^{-1}\gamma_n^{-1}(t)>\gamma_n^{-1}(t)$, for all sufficiently large $n$. Then $g\gamma_n^{-1}(t)<\gamma_n^{-1}(t)\leq t$ and hence $g\gamma_{n}^{-1}\in P^{-1}$. Thus
\[ s\leq \lim_{n\rightarrow\infty}g\gamma_{n}^{-1}(t)\leq \lim_{n\rightarrow\infty}\gamma_{n}^{-1}(t)=s.\]
So $g(s)=s$ and hence $g^{-1}(s)=s$. This contradiction shows that $s$ is fixed by each element in $P$. 
Since for each $i$, either $g_i\in P$ or $g_i^{-1}\in P$, $s$ is fixed by $\Gamma$.

\medskip
Thus the Claim holds. Now, by Denrion-Hurtado's theorem, $\Gamma$ fixes the $[z,s]$ pointwise.
\end{proof}

\section{Proof of the main theorem}

Now we are ready to prove the main theorem.

\begin{lem}\cite[Proposition 5.11]{DK}\label{index}
Let $G$ be a finitely generated group. For any $n\in\mathbb{N}$, there are finitely many subgroups of $G$ with indices less than $n$.
\end{lem}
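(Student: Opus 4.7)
The plan is to use the classical coset-action argument, recovering each finite-index subgroup as the preimage of a point stabilizer under a homomorphism from $G$ into a finite symmetric group.

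First, fix a finite generating set $\{g_1,\ldots,g_d\}$ of $G$ and let $k\leq n$. I would associate to any subgroup $H\leq G$ of index exactly $k$ the left multiplication action of $G$ on the coset space $G/H$. Choosing any bijection $G/H\to\{1,\ldots,k\}$ that sends the trivial coset $eH$ to some point $i_H$ yields a homomorphism $\phi_H:G\to S_k$ satisfying
\[
H=\phi_H^{-1}\bigl(\mathrm{Stab}_{S_k}(i_H)\bigr),
\]
since $g\in H$ iff $g\cdot eH=eH$ iff $\phi_H(g)$ fixes $i_H$. Thus every finite-index subgroup is recovered from a pair (homomorphism to $S_k$, marked point in $\{1,\ldots,k\}$).

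Next, I would bound the number of such pairs. A homomorphism $G\to S_k$ is determined by the images of $g_1,\ldots,g_d$, so there are at most $(k!)^d$ of them; combined with at most $k$ choices of marked point, the number of subgroups of $G$ of index exactly $k$ is bounded by $k\,(k!)^d$. Summing over $k=1,\ldots,n-1$ gives a finite upper bound on the total number of subgroups of index less than $n$, which is what we want.

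There is no serious obstacle: the only point that requires a moment of verification is that the assignment $H\mapsto(\phi_H,i_H)$ is well defined and that distinct subgroups produce distinct pairs once one fixes the convention $i_H=$ image of $eH$. Since the preimage formula above reconstructs $H$ from $(\phi_H,i_H)$, injectivity is automatic, and the whole argument is bookkeeping around the basic fact that $\mathrm{Hom}(G,S_k)$ is finite when $G$ is finitely generated.
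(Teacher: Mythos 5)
Your argument is correct and is the standard coset-action proof; the paper itself gives no proof of this lemma, simply citing Drutu--Kapovich, and your argument is essentially the one found there. The only cosmetic remark is that you do not even need injectivity of $H\mapsto(\phi_H,i_H)$: surjectivity of the reverse assignment $(\phi,i)\mapsto\phi^{-1}(\mathrm{Stab}(i))$ onto the set of index-$k$ subgroups already yields the finite bound.
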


\begin{proof}[Proof of the main theorem]  Along the same lines of the proof of Theorem 1.2 in \cite{SX},  Theorem \ref{main theorem 1} follows from  Theorem \ref{end fixed}
and Lemma \ref{index} (We need only use Theorem \ref{end fixed}
and Lemma \ref{index} instead of Proposition 5.3 and Lemma 5.5 in \cite{SX} respectively).
\end{proof}



\begin{thebibliography}{HD}
\bibitem{AN} E. Abdalaoui and I. Naghmouchi, \textit{Group action with finite orbits on local dendrites}. Dyn. Syst. 36 (2021), 714-730.

\bibitem{Bo} B. Bowditch, \textit{Hausdorff dimension and dendritic limit sets}. Math. Ann. 332 (2005), 667-676.

\bibitem{Br} A. Brown,  \textit{Recent progress in the Zimmer program.} Current developments in mathematics 2018, 1-56.

\bibitem{Bu} M. Burger and N. Monod, \textit{Bounded cohomology of lattices in higher rank Lie groups}, J. Eur. Math. Soc. 1 (1999), 199-235.

\bibitem{DH} B. Deroin and S. Hurtado, \textit{Non left-orderability of lattices in higher rank semi-simple Lie groups}. arXiv:2008.10687.

\bibitem{DK} C. Drutu and M. Kapovich, \textit{Geometric Group Theory}.  Colloquium Publications, 2018.

\bibitem{DM1} B. Duchesne and N. Monod, \textit{Structural properties of dendrite groups}. Trans. Amer. Math. Soc. 371 (2019), 1925-1949.

\bibitem{DM2} B. Duchesne and N. Monod, \textit{Group actions on dendrites and curves}. Ann. Inst. Fourier (Grenoble) 68 (2018), 2277-2309.

\bibitem{Fi} D. Fisher,  \textit{ Recent developments in the Zimmer program.} Notices Amer. Math. Soc. 67 (2020), no. 4, 492-499.

\bibitem{Gh99} \'E. Ghys. \textit{Actions de r\'eseaux sur le cercle}. Invent. Math. 137 (1999), 199-231.

\bibitem{Gh01} \'E. Ghys. \textit{Groups acting on the circle}. L'Enseign. Math. 47 (2001), 329-407.

\bibitem{GM} E. Glasner and M. Megrelishvili, \textit{Group actions on treelike compact spaces}. Sci. China Math. 62 (2019), 2447-2462.

\bibitem{Glass} A.  Glass, \textit{Partially ordered groups}.  World Scientific, 1999.

\bibitem{Kop} V. Kopytov, \textit{Semilinearly ordered groups}.  Selected surveys presented at two conferences on Advances in algebra and model theory,  Pages 149-170, 1997.

\bibitem{Mal} A.  Malyutin, \textit{On groups acting on dendrons.} Zap. Nauchn. Sem. S.-Peterburg. Otdel. Mat. Inst. Steklov.
(POMI), 415(Geometriya i Topologiya. 12):62-74, 2013. transl. J. Math. Sci. (N.Y.) 212 (2016), 558-565.



\bibitem{Mi} Y. Minsky, \textit{On rigidity, limit sets, and end invariants of hyperbolic 3-manifolds}. J. Amer. Math. Soc., 7 (1994), 1994.


\bibitem{Na1} A. Navas,  \textit{Groups of circle diffeomorphisms}, Translation of the 2007 Spanish edition, Chicago Lectures in Mathematics, University of Chicago Press, Chicago, IL, 2011.

 \bibitem{Shi} E. Shi, \textit{Free subgroups of dendrite homeomorphism group.} Topology Appl. 159 (2012),  2662-2668.

\bibitem{SX} E. Shi and H. Xu, \textit{Rigidity for higher rank lattices acting on dendrites}. Arxiv: 2206.04022.  (2022)

\bibitem{SY} E. Shi and X. Ye, \textit{Equicontinuity of minimal sets for amenable group actions on dendrites.} Dynamics: topology and numbers, 175-180, Contemp. Math., 744, Amer. Math. Soc., [Providence], RI, 2020.



\bibitem{Wi2}D. Witte-Morris, \textit{Arithmetic groups of higher $\mathbb{Q}$-rank cannot act on 1-manifolds}, Proc. Amer. Math. Soc. 122 (1994),  333-340.

\bibitem{Zim} R. Zimmer, \textit{Ergodic theory and semisimple groups}. Monographs in mathematics, Birkh\"{a}user, 1984.
\end{thebibliography}
\end{document}